\newtheorem{theorem}{Theorem}
\newtheorem{lemma}[theorem]{Lemma}
\crefname{lemma}{Lemma}{Lemmas}
\crefname{theorem}{Theorem}{Theorems}
\newtheoremstyle{definition}
{4pt}
{4pt}
{\sl}
{}
{\bfseries}
{.}
{.5em}
{}
\theoremstyle{definition}
\theoremstyle{remark}
\theoremstyle{plain}
\newtheorem*{claim*}{Claim}
\newtheoremstyle{introthms}
{3pt}
{3pt}
{\itshape}
{}
{\bfseries}
{.}
{.5em}
{\thmnote{#3}}
\theoremstyle{introthms}
\DeclareMathOperator{\IR}{IR}
\def\a{{\buildrel {\text{ind}} \over \longrightarrow}}
\begin{document}

\title{Induced Ramsey Number for a Star versus a fixed  Graph}
\author{Maria  Axenovich\thanks{Department of Mathematics, Karlsruhe Institute of Technology} ~ and   ~Izolda Gorgol\thanks{Department of Applied Mathematics, Lublin University of Technology}}

\maketitle

\begin{abstract}
We write $F\a (H,G)$ for graphs $F, G,$ and $H$, if for any coloring of the edges of $F$ in red and blue, there is either a red induced copy of $H$ or a blue induced copy of $G$.
For graphs $G$ and $H$, let $\IR(H,G)$ be the smallest number of vertices in a graph $F$ such that $F\a (H,G)$.

In this note we consider the case when $G$ is a star on $n$ edges, for large $n$ and $H$ is a fixed graph. 
We prove that  $$ (\chi(H)-1) n \leq \IR(H, K_{1,n}) \leq (\chi(H)-1)^2n + \epsilon n,$$ for any $\epsilon>0$,  sufficiently large $n$, and $\chi(H)$ denoting chromatic number of $H$. 
The lower bound is asymptotically tight  for any fixed bipartite $H$. 
The upper bound is attained up to a constant factor, for example by a clique $H$.
\end{abstract}

\section{Introduction}
We write $F\a (H,G)$ for graphs $F, G,$ and $H$, if for any coloring of the edges of $F$ in red and blue, there is either a red induced copy of $H$ or a blue induced copy of $G$.
For graphs $G$ and $H$, let the {\it induced Ramsey number} for $H$ and $G$, denoted   $\IR(H,G)$,  be the smallest number of vertices in a graph $F$ such that $F\a (H,G)$.  The existence of such a graph $F$ for any graphs $H$ and $G$ was first proven by Deuber \cite{deuber}, extending a classical result by Ramsey \cite{R}.   This led to an extensive research on the induced Ramsey numbers.  For more recent results, see papers of Conlon, Fox, and  Sudakov \cite{CFS, FS}, Dudek,  Frankl,  and  R\"{o}dl \cite{DFR},  as well as Kostochka and Sheikh \cite{KS},   Schaefer and  Shah \cite{SS},  and Kohayakawa,  Pr\"omel, and  R\"odl \cite{KPR}.  We do not even attempt to mention results on induced hypergraph Ramsey numbers.

In this note we consider the case when $G$ is a star on $n$ edges, i.e., $G=K_{1,n}$, and $H$ is a fixed graph.

Finding the classical Ramsey number of a graph $H$ versus a star,  $R(H, K_{1,n})$,  translates into finding 
graphs with maximum degree less than $n$ and having no $H$ in the complement, i.e., so-called min-degree extremal problem for $H$. 
The respective induced Ramsey problem has a very different nature.  It becomes nontrivial already when $H$ is a matching.
Specifically $\IR(nK_2, K_{1,n})$ is superlinear as shown by Conlon, Fox, and Sudakov \cite{CFS, F}, see also a related paper by  Fox, Huang, and Sudakov \cite{FHS}:
$n(e^{c\log^*n}) \leq \IR(nK_2, K_{1,n}) \leq ne^{c' \sqrt{\log n}}.$

Our focus is the case when $H$ is a fixed graph and $n$ grows.    In this regime, one can easily show that $\IR(H, K_{1,n}) = \Theta(n)$.
We provide easy general bounds $n\leq \IR(H, K_{1,n}) \leq  |E(H)| (n-1) +|V(H)|$,  confirming this fact.  
Since $\IR(H, K_{1,n}) \geq (\chi(H)-1)n$, where $\chi(H)$ denotes the chromatic number of $H$, we see that $\IR(H, K_{1,n}) \geq 2n$ for non-bipartite $H$.
We show that for bipartite $H$, the trivial lower bound $\IR(H, K_{1,n})\geq n$ is asymptotically tight.
We also provide general bounds on $\IR(H, K_{1,n})$ in terms of $\chi(H)$. Note that when $H$ is a star, the situation is quite special, namely  
 $\IR(K_{1,\ell}, K_{1,n}) = n+\ell$  with  $K_{1,n+\ell-1} \a (H, K_{1,n})$. We include the proof of this fact for completeness in Lemma \ref{stars}.

\begin{theorem}\label{Lower-bound}  Let $H$ be a fixed graph  that is not a star. There is a positive constant $c$ such that for any sufficiently large $n$, 
$\IR(H, K_{1,n}) \geq n+ c\sqrt{n} $.
\end{theorem}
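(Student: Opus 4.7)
The plan is to show that every graph $F$ on fewer than $n + c\sqrt{n}$ vertices admits a red/blue edge-coloring with no red induced copy of $H$ and no blue induced copy of $K_{1,n}$, which yields $\IR(H,K_{1,n}) \geq n + c\sqrt{n}$. The driving observation is that since $H$ is not a star, every red subgraph with vertex cover of size smaller than $\tau(H)$ (the vertex cover number of $H$) is automatically $H$-induced-free.

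As a warm-up, for any $W \subseteq V(F)$ with $|W| \leq \tau(H) - 1$, the coloring that puts all edges incident to $W$ into red has red graph with vertex cover $W$, hence no induced copy of $H$. Since $F \a (H, K_{1,n})$ is assumed, this coloring must produce a blue induced $K_{1,n}$; because blue edges avoid $W$, this copy lies entirely in $F - W$, giving $|V(F)| \geq n + 1 + |W|$, and hence $|V(F)| \geq n + \tau(H)$. This yields only a constant improvement over $n$.

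To push the bound to $n + c\sqrt{n}$, I would use richer $H$-induced-free red subgraphs. Since $H$ is not a star, either (a) $H$ contains $P_3$ (some vertex has degree $\geq 2$), in which case every matching in $F$ is $P_3$-free and thus $H$-induced-free; or (b) $H = mK_2$ with $m \geq 2$, in which case a single star or a small clique in $F$ is $2K_2$-free, hence $H$-induced-free. For each such structured red subgraph $R \subseteq E(F)$, the Ramsey property forces a blue induced $K_{1,n}$ of $F$ whose edges are all outside $R$; consequently, no such ``allowed'' $R$ can be a transversal for the family of induced $K_{1,n}$'s of $F$.

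The quantitative step is a double-counting argument. Fix an induced $K_{1,n}$ $(v_0, I_0)$ of $F$, and set $U = V(F) \setminus (\{v_0\} \cup I_0)$, so $|U| = k - 1$ where $k = |V(F)| - n$. The non-transversality obstructions should force $\Omega(n)$ additional induced $K_{1,n}$'s of $F$ whose patterns of interaction with $U$ are pairwise distinct; on the other hand, the number of such interaction patterns is bounded above by a polynomial $\binom{k}{O(1)}$ in $k$. Equating these yields $k = \Omega(\sqrt n)$. The main obstacle is exactly this quantitative step---converting the combinatorial non-transversality into the sharp $\sqrt{n}$ vertex bound---which I expect to require an extremal-type estimate (perhaps of K\H{o}v\'ari--S\'os--Tur\'an flavour) on the number of small substructures of $F$.
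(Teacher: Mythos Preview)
Your proposal is not a proof: you explicitly identify the ``quantitative step'' as the main obstacle and leave it open, saying only that you \emph{expect} an extremal-type estimate to do the job. That step is the entire content of the theorem---once one knows which red subgraphs are $H$-induced-free, the difficulty is precisely to show that on $n+o(\sqrt{n})$ vertices some such red subgraph can be chosen to kill every blue induced $K_{1,n}$. Your double-counting sketch (``$\Omega(n)$ induced stars with distinct interaction patterns against $U$, versus $\binom{k}{O(1)}$ patterns'') is too vague to evaluate; in particular you give no mechanism that produces $\Omega(n)$ stars with \emph{pairwise distinct} interaction types, nor any reason the exponent should be~$2$.

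The paper's argument is quite different, and instructive. The case split is not ``$\Delta(H)\ge 2$ versus $H=mK_2$'' but rather ``$H$ is complete bipartite versus not''. If $H$ is complete bipartite and not a star, then $C_4\subseteq H$, and the bound follows immediately from the classical (non-induced) Ramsey lower bound $R(C_4,K_{1,n})\ge n+\sqrt{n}-o(\sqrt{n})$ of Burr, Erd\H os, Faudree, Rousseau, and Schelp. If $H$ is \emph{not} complete bipartite, then the red graph is taken to be a complete bipartite subgraph of $F$ (not a matching): any complete bipartite red graph is automatically $H$-induced-free. The counting that replaces your missing step is then short and explicit. In a graph $F$ on $n+q$ vertices, call a vertex \emph{large} if it is the centre of an induced $K_{1,n}$; if $Y$ is the leaf set of one such star, then $Y$ is independent, so no vertex of $Y$ is large, whence the set $X$ of large vertices has $|X|\le q$. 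Each $x\in X$ has at least $n$ neighbours, hence at least $n-q$ in $Y$; summing, at most $q|X|\le q^2$ vertices of $Y$ miss some vertex of $X$. Thus for $q\le \sqrt{n+1/4}-\tfrac12$ there is $Y'\subseteq Y$ with $|Y'|\ge q$ and $X,Y'$ inducing a complete bipartite graph. Colouring these edges red leaves each large vertex with at most $n+q-1-q<n$ blue neighbours, so no blue induced $K_{1,n}$ survives.

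Two further remarks on your outline. First, your class of red graphs in case~(a)---matchings---is strictly weaker than the paper's complete bipartite graphs (every matching is complete bipartite restricted to two vertices per edge, but far fewer edges are available), so even if your scheme can be completed it would presumably be harder, not easier, than the paper's. Second, your warm-up observation is correct but the way you phrase case~(a) (``$H$ contains $P_3$'') is loose: what you use is $\Delta(H)\ge 2$, not that $P_3$ is an induced subgraph.
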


\begin{theorem}\label{general}
For any $\gamma>0$ and any graph $H$ of chromatic number $r$ there is $n_0\in \mathbb{N}$ such that for any $n>n_0$, 
$\IR(H, K_{1,n})\leq (r-1)^2 n + \gamma n$.
\end{theorem}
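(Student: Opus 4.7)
My plan is to prove the upper bound by explicitly constructing a host graph $F$ on $N = (r-1)^2 n + \gamma n$ vertices and verifying that any $2$-coloring of its edges yields either a red-induced $H$ or a blue $K_{1,n}$. The candidate construction is a blow-up $F = K_{r-1}[W]$: partition $V(F)$ into $s := r-1$ blocks $V_1,\ldots,V_s$ of equal size $m = (r-1)n + \lceil \gamma n/(r-1)\rceil$, join every pair of distinct blocks completely, and place inside one designated block (say $V_s$) a copy of a graph $W$ chosen so that any $2$-coloring of $W$ with blue maximum degree at most $n-1$ admits a red-induced copy of the bipartite subgraph $H[A_{r-1} \cup A_r]$ of $H$, where $A_1,\ldots,A_r$ is a fixed proper $r$-coloring of $V(H)$. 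The existence of such a $W$ on $m$ vertices is obtained from the $r=2$ case of this very theorem, applied with the bipartite graph $H[A_{r-1}\cup A_r]$ in place of $H$.

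Given a $2$-coloring of $F$ avoiding a blue $K_{1,n}$, every vertex has blue degree at most $n-1$. I would produce a red-induced copy of $H$ via a staged embedding. First, the defining property of $W$ yields a red-induced copy of $H[A_{r-1} \cup A_r]$ inside $V_s$, giving a partial embedding $\phi \colon A_{r-1} \cup A_r \to V_s$. Then, for $i = 1,\ldots,s-1$ in turn, I embed $A_i$ into $V_i$ one vertex at a time, selecting at each step a vertex of $V_i$ whose red/blue adjacency pattern to the already-embedded vertices matches the required pattern in $H$. The red-adjacency constraints are handled by a counting argument: each previously embedded vertex excludes at most $n-1$ positions in $V_i$ through its blue neighborhood, so the set of valid images has size at least $m - (|V(H)|-1)(n-1) > |A_i|$ once $n$ is large enough in terms of $|V(H)|$ and $\gamma$.

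The main obstacle lies in the induced condition for cross-block pairs. Because distinct blocks of $F$ are fully joined, any cross-block pair of images is an $F$-edge and therefore must be \emph{blue} whenever its $H$-counterpart is a non-edge. Blue is scarce (at most $n-1$ edges per vertex), so realizing all the required cross-class non-edges of $H$ as blue edges simultaneously is not automatic. I would address this by ordering the vertex-by-vertex embedding so that the incremental blue-adjacency demands at each step are few and compatible with the available blue structure (using the $\gamma n/s$ slack per block); if this turns out to be too delicate, one can instead modify the cross-block structure of $F$ so that some cross-block pairs are $F$-non-edges, mirroring $H$'s cross-class non-edges, thereby trading construction complexity for a cleaner embedding argument. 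Either way, the leading coefficient $(r-1)^2$ decomposes as the product $s \cdot m = (r-1) \cdot (r-1)n$, where the per-block size $(r-1)n$ is forced by accumulating blue-neighborhood exclusions coming from the other $r-2$ blocks during the iterative embedding.
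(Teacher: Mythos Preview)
Your proposal has a genuine gap that you yourself flag but do not close: the induced condition for cross-block pairs. In your construction distinct blocks are completely joined, so whenever $H$ has a non-edge between two colour classes mapped to different blocks, the images must be joined by a \emph{blue} edge of $F$. But blue neighbourhoods have size at most $n-1$, so requiring $\phi(v)$ to lie in the blue neighbourhood of even a single previously embedded vertex shrinks the candidate pool to at most $n-1$; one further blue-adjacency constraint, or any red-adjacency constraint (which itself removes up to $n-1$ candidates), can leave the pool empty. No ordering of the greedy embedding rescues this once $H$ has, say, a vertex with two non-neighbours in other colour classes. Your fallback of ``modifying the cross-block structure'' is the right instinct but is not a proof; it amounts to redesigning $F$ from scratch. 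There is a second, independent problem even when $H$ is complete multipartite (so no blue constraints arise): the red-exclusion count is $(|V(H)|-1)(n-1)$, not $(r-2)(n-1)$, so your block size must be roughly $|V(H)|\,n$ rather than $(r-1)n$, and the total becomes $(r-1)|V(H)|\,n$ instead of $(r-1)^2 n$. Finally, invoking the $r=2$ case of the theorem to build $W$ is circular as stated.

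The paper's construction is quite different. It takes $F$ to be a \emph{random $r$-partite} graph (not complete $(r-1)$-partite), with one small part of size $\gamma n$ and $r-1$ large parts of size $(1+2\gamma)(r-1)n$, and with edge probability $1-\sigma$ between parts for some small $\sigma$. The key point is that $F$ then has genuine non-edges between parts, so cross-class non-edges of $H$ are realised as non-edges of $F$ rather than as scarce blue edges. The embedding is not greedy: one applies a multicoloured Szemer\'edi Regularity Lemma to the red/blue/``non-edge'' tricolouring of $G$, proves a combinatorial lemma (Lemma~\ref{yellow-pink-white}) guaranteeing a transversal $K_r$ in the reduced graph all of whose pairs are regular with density at least $\eta$ in red and at least $\sigma/2$ in non-edges, and then invokes an induced embedding lemma on those $r$ clusters. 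The absence of a blue $K_{1,n}$ is used only to bound, for each vertex, the number of red-sparse regular pairs it can participate in toward a larger part; this is what drives the $(r-1)^2$ coefficient, not a per-vertex exclusion count as in your sketch.
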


The following gives more precise upper bound for some bipartite graphs that allows to replace $\gamma n$ term of Theorem \ref{general} with $o(n)$ term.

\begin{theorem}\label{Upper-bound} Let $t$ be an integer $t\geq 2$. There is a positive constant $c$ and  $n_0\in  \mathbb{N}$, such that for any $n>n_0$,
$\IR(2K_2, K_{1,n}) \leq  n + c n \log^{-1/4} n$, 
$\IR(P_4, K_{1,n}) \leq n + c n \log ^{-1/4}n$, 
$\IR(K_{t,t}, K_{1,n})\leq n + c n^{1- \frac{1}{2t}}$.
\end{theorem}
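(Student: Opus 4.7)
The plan is to construct, for each $H\in\{2K_2, P_4, K_{t,t}\}$, an explicit host graph $F$ on $n+o(n)$ vertices such that $F\a (H, K_{1,n})$. The three cases require different constructions because the constraint that $F$ contain induced $H$ differs.

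For the $K_{t,t}$ bound, I would take $F=K_{a,b}$, the complete bipartite graph with $|A|=a=n+k$ and $|B|=b$, where $k$ and $b$ are chosen so that $a+b\le n+cn^{1-1/(2t)}$ and $b(k+1)$ exceeds the K\H{o}v\'ari--S\'os--Tur\'an threshold for $K_{t,t}$-free bipartite graphs with parts of size $(a,b)$. For the verification, in any $2$-coloring of $E(F)$, no vertex of $A$ centers a blue $K_{1,n}$ because its $F$-neighborhood $B$ has size $b<n$; hence every blue $K_{1,n}$ must be centered in $B$, and avoiding such forces each $b\in B$ to have at least $k+1$ red neighbors in $A$. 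By the K\H{o}v\'ari--S\'os--Tur\'an bound, the red bipartite subgraph, having at least $b(k+1)$ edges, contains a $K_{t,t}$, which is induced in $F$ since $A$ and $B$ are independent.

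For the $2K_2$ and $P_4$ bounds, $K_{a,b}$ is not a suitable host since it contains no induced $2K_2$ or $P_4$. I would modify $F$ to be a bipartite graph on parts $A,B$ where each $b\in B$ is joined to a subset $N(b)\subseteq A$ and the non-neighbor sets $M(b)=A\setminus N(b)$ are chosen pseudorandomly. Any pair $b_1,b_2\in B$ with incomparable neighborhoods yields vertices $a_1\in N(b_1)\setminus N(b_2)$ and $a_2\in N(b_2)\setminus N(b_1)$ for which $\{a_1,a_2,b_1,b_2\}$ induces a $2K_2$ in $F$; induced $P_4$'s arise similarly. The arrow verification then mirrors the $K_{t,t}$ case: each $b\in B$ has many forced red edges, and the pseudorandom structure of the family $\{M(b)\}_{b\in B}$ forbids the red subgraph from avoiding induced $2K_2$ or $P_4$ in $F$ (a red subgraph with no induced $2K_2$ would have to resemble a chain graph on $B$, incompatible with the lower bound on red degrees once $|B|$ is large enough).

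The principal difficulty is the $\log^{-1/4}n$ factor in the error terms for $2K_2$ and $P_4$. This exponent arises from a careful balance between the pseudorandom quality of the family $\{M(b)\}_{b\in B}$ and a structural extremal constraint on the adversary's red edges; I expect the value $1/4$ to emerge from optimizing parameters in a K\H{o}v\'ari--S\'os--Tur\'an-type inequality applied to an auxiliary hypergraph of ``bad'' red configurations, or equivalently from a bipartite Ramsey-type bound. Getting these optimizations to yield a matching pair of error terms is the step I expect to be most delicate.
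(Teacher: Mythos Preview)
Your treatment of $K_{t,t}$ is essentially the paper's argument with the roles of the two parts relabeled: the paper takes the small side $A$ of size $\tfrac{\epsilon}{2}n$ and the large side $B$ of size $(1+\tfrac{\epsilon}{2})n$ with $\epsilon=10\,n^{-1/2t}$, forces each vertex of $A$ to have $\ge \tfrac{\epsilon}{2}n$ red edges, and then invokes K\H{o}v\'ari--S\'os--Tur\'an. Your version with $|A|=n+k$, $|B|=b$ and ``$b(k+1)$ exceeds the KST threshold'' is the same idea and works.

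For $2K_2$ and $P_4$ your outline is in the right direction---a bipartite host in which each $b\in B$ has degree $\ge n+\epsilon n$ and the non-neighbor sets $M(b)$ are far from being a chain---but two concrete ingredients are missing, and your guess about where the exponent $1/4$ comes from is off. First, the paper does not use a pseudorandom family; it takes the \emph{explicit} incidence graph of a complete set system: $X$ with $|X|=b=\log n$, $Y=\binom{X}{b-a}$ with $a=\sqrt{\log n}$, and then blows up each $x\in X$ into a blob of size roughly $n/\log n$. Since $|Y|=\binom{b}{a}\le a^{2a}=2^{\sqrt{\log n}\,\log\log n}=o(\epsilon n)$, the $Y$-side contributes negligibly to $|V(F)|$. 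Second, the contradiction does not come from a KST-type inequality or a chain-graph characterization. One picks a vertex $x$ in a blob with a large red neighborhood $Y_x\subseteq Y$, and vertices $y_1,y_2$ whose red edges hit blobs indexed by sets $X_1,X_2\subseteq X$; a red induced $2K_2$ (respectively $P_4$) is forced unless $Y_x\times X_1$ (respectively $Y_x\times X_2$) is a complete bipartite subgraph of the incidence graph $I$. The punchline is a direct binomial count: with $\epsilon=\log^{-1/4}n=a^{-1/2}$, any $X''\subseteq X$ of size $\tfrac{\epsilon}{1+2\epsilon}b$ is contained in fewer than $\tfrac{\epsilon}{1+2\epsilon}\binom{b}{a}$ elements of $Y$, which is exactly the inequality $\binom{b(1+\epsilon)/(1+2\epsilon)}{a}<\tfrac{\epsilon}{1+2\epsilon}\binom{b}{a}$; the choice $\epsilon=a^{-1/2}$ is what makes $(1+\tfrac{\epsilon}{1+\epsilon})^{a-1}\epsilon\ge 1$ while the left side stays below $1$. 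That balance, not an auxiliary-hypergraph Tur\'an bound, is where the $\log^{-1/4}n$ enters.
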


The paper is structured as follows. We provide some known results on related induced Ramsey numbers and give the basic general bounds in Section \ref{Basic}. We prove main results in Section \ref{Proofs}.
For standard graph theoretic notions and terminology we refer the reader to a book of West \cite{W}.

\section{Known results and general bounds} \label{Basic}

\subsection{$\IR(H, K_{1,n})$ for $H$ - complete, complete bipartite, path, star, and cycle}
The known results for  $\IR(H, K_{1,n})$ include  $\IR(K_t, K_{1,n})= (n-1)t(t-1)/2+t$, see Gorgol \cite{G1},  ~$\IR(K_{t,t}, K_{1,n})\leq  (n+1)t$, $\IR(P_t, K_{1,n})\leq(t-1)n+1$, \and  $\IR(C_t, K_{1,n})\leq tn$, 
see Gorgol \cite{G-thesis}.
Note that $\IR(C_4, K_{1,n})\geq R(C_4, K_{1,n}) \geq n + \lfloor \sqrt{n} - 6 n^{11/40}\rfloor,$  as shown by Burr, Erd\H{o}s, Faudree, Rousseau, and Schelp \cite{BEFRS}.
Here, the term $n^{11/40}$ came from the known at a time upper bound  on the difference between two consecutive primes. In fact, Parsons \cite{P} earlier 
proved that when $q$ is a power of a prime and $n=q^2+1$ then $R(C_4, K_{1,n}) = q^2+q+2$. 
While the result for stars was announced by Harary et al.  \cite{H} in the diagonal case, the proof was not included in that paper. We state it here for completeness.

\begin{lemma} \label{stars}
For any positive integers $\ell$ and $n$, 
$\IR(K_{1,\ell}, K_{1,n})= n+\ell$. 
\end{lemma}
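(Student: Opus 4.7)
My plan handles the two bounds separately, both by explicit construction.

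For the upper bound $\IR(K_{1,\ell}, K_{1,n}) \leq n+\ell$, I would take $F = K_{1,n+\ell-1}$. Any red-blue coloring of its $n+\ell-1$ edges has, by pigeonhole, at least $\ell$ red or at least $n$ blue edges. Since in a star any collection of $k$ edges spans an induced $K_{1,k}$ (the leaves are pairwise non-adjacent), this produces either an induced red $K_{1,\ell}$ or an induced blue $K_{1,n}$.

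For the lower bound, I need to show that every graph $F$ on $n+\ell-1$ vertices admits a coloring with no induced red $K_{1,\ell}$ and no induced blue $K_{1,n}$. By symmetry of $\IR$ I may assume $n \geq \ell$. The plan is to split on $\alpha(F)$. If $\alpha(F) < n$, color every edge blue: there are no red edges, and no independent $n$-set exists, so neither forbidden configuration appears. If $\alpha(F) \geq n$, I would fix an independent set $I$ with $|I| = n$, let $A = V(F) \setminus I$ (so $|A| = \ell-1$), color all edges inside $A$ blue, and for each $v \in A$ color exactly $\min(\ell-1,\, |N(v) \cap I|)$ of its edges to $I$ red and the rest blue.

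Then I would verify the two constraints at every vertex. For $u \in I$ every incident edge goes to $A$, so the red and blue degrees at $u$ are each at most $|A| = \ell - 1$, which is both less than $\ell$ and at most $n-1$. For $v \in A$ the red neighbors lie inside the independent set $I$ and number at most $\ell-1$, while the blue neighbors form $(N(v) \cap A) \cup B$ with $B \subseteq I$ and $|B| \leq \max(0, |N(v) \cap I| - (\ell - 1))$, giving total blue degree at most $(\ell-2) + (n - \ell + 1) = n-1$. The only point needing any care is this last arithmetic: one must use simultaneously the bound $|A|-1 = \ell-2$ on the $A$-contribution and the cap of $\ell-1$ red edges per $v \in A$ on the $I$-contribution in order to push the blue degree strictly below $n$; the rest of the verification is routine.
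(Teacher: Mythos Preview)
Your proof is correct. The upper bound via $K_{1,n+\ell-1}$ matches the paper exactly, but your lower bound takes a genuinely different route.

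The paper's lower bound uses Vizing's theorem: if $\Delta(G) < |V(G)|-1$ it decomposes $E(G)$ into at most $n+\ell-2$ matchings, colors $n-1$ of them blue and $\ell-1$ red, so every vertex has blue degree at most $n-1$ and red degree at most $\ell-1$; the case $\Delta(G)=|V(G)|-1$ is handled by peeling off the universal vertex. Your argument avoids edge-coloring entirely: you split on whether $\alpha(F)<n$ (all-blue works since an induced $K_{1,n}$ needs $n$ independent leaves) or $\alpha(F)\geq n$ (fix a size-$n$ independent set $I$, put the remaining $\ell-1$ vertices in $A$, and cap the number of red $I$-edges at each $v\in A$ by $\ell-1$). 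Your approach is more elementary, needing nothing beyond pigeonhole and the observation that the leaves of an induced star are independent; the paper's approach, on the other hand, gives a coloring in which \emph{every} vertex simultaneously has small red and blue degree, regardless of the local structure, which is a slightly stronger conclusion than you need here.
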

\begin{proof}
For the upper bound, observe that $K_{1, n+\ell-1}\a (K_{1,\ell}, K_{1,n})$.
For the lower bound, we claim that any graph $G$ on $k$ vertices, 
$k<n+\ell$ can be colored with no blue induced star on $n$ edges and no red induced star on $\ell$ edges. Let $\Delta(G)$ be maximum degree of $G$.  If $\Delta(G)<k-1$, then by Vizing's theorem we can edge-decompose $G$ into at most $\Delta +1 \leq k-2 +1 = k-1\leq n+\ell -2$ matchings. Color $n-1$ of these matchings blue and $\ell-1$ of these matchings red. This coloring contains no blue star on $n$ edges and no red star on $\ell$ edges. 
 If $\Delta(G) =k-1$, then there is a vertex $v$ adjacent to all other vertices of $G$. 
Note that in this case no induced star with at least $2$ edges can have $v$ as a  leaf.
Let $G'$ be a graph obtained from $G$ by deleting edges incident to $v$. Then $\Delta(G')<k-1$, so $G'$ can be colored with no blue induced star on $n$ edges and no red induced star on $\ell$ edges. Now, color $n-1$ edges incident to $v$ blue and remaining,  $(k-1)-(n-1)< \ell $  edges incident to $v$ red.
\end{proof}

\subsection{General  bounds on $\IR(H, K_{1,n})$}

Gorgol \cite{G}, proved that $\IR(H, K_{1,n}) \geq (n-1) \omega(\omega-1)/2 + \omega, $
where $\omega= \omega(H)$ is the order  of a largest clique in $H$. 
On the other hand, the classical lower bound $\IR(H, K_{1,n})\geq n(\chi(H)-1)+1$ holds by splitting the vertex set of any graph on $n(\chi(H)-1)$ vertices into $\chi(H)-1$ sets of equal sizes, coloring all edges inside the sets blue and all edges between the sets red. This coloring then has no red copy of $H$ and no blue copy of $K_{1,n}$.\\

The following easy lemma generalizes the upper bound on $\IR(H, K_{1,2})$ by Kostochka and Sheikh \cite{KS}. While this bound is much weaker than our general upper bound, we include it here since it holds for any $n$ and does not use the Regularity Lemma. So, it is more applicable for bounding induced Ramsey numbers of fixed small graphs.

\begin{lemma}\label{lem-edges}
Let $H$ be a graph, then $\IR(H, K_{1,n}) \leq |E(H)|(n-1) + |V(H)|$.  If $H'$ is a blow-up of $H$ with $s$ vertices in each part, then 
$\IR(H', K_{1,n}) \leq s(|E(H)| (n-1) + |V(H)|)$.
\end{lemma}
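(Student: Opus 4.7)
The plan is to exhibit an explicit graph $F$ on $|E(H)|(n-1)+|V(H)|$ vertices with $F\a(H,K_{1,n})$, and to deduce the blow-up bound by taking the $s$-blow-up of that same $F$.

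For the basic bound I would take $V(F)=V(H)\sqcup W$ with $W=\bigsqcup_{e\in E(H)}W_e$ and $|W_e|=n-1$, arrange $V(H)$ to induce a copy of $H$, and for each edge $e=uv\in E(H)$ (after fixing an arbitrary orientation of $E(H)$, with tail $u$) declare each $w\in W_e$ to be a \emph{false twin} of $u$ in $F$: $N_F(w)=N_H(u)$. Then $W$ is independent in $F$, and replacing any $u\in V(H)$ by one of its false twins yields another induced copy of $H$ in $F$.

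To prove $F\a(H,K_{1,n})$ I would fix a 2-coloring of $E(F)$ and assume for contradiction that no red induced $H$ and no blue induced $K_{1,n}$ exist. Since $V(H)$ induces $H$, some edge $e_0=u_0v_0\in E(H)$ must be blue. For each false twin $w\in W_{e_0}$ of $u_0$, the substituted copy $(V(H)\setminus\{u_0\})\cup\{w\}$ must also contain a blue edge; a case analysis on whether this blue edge is a $V(H)$-edge not incident to $u_0$ (handled by iterating on the remaining blue edges of $H$) or a $wv_k$-edge with $v_k\in N_H(u_0)$ pins down $n$ pairwise nonadjacent blue neighbors at a single pivot $v_k$, using crucially that $\{u_0\}\cup W_{e_0}$ is an independent set of size $n$ inside $N_F(v_k)$ whenever $u_0v_k\in E(H)$. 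This produces the desired blue induced $K_{1,n}$, a contradiction.

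For the blow-up, let $F_0$ be the graph above and set $F=F_0[s]$, of size $s(|E(H)|(n-1)+|V(H)|)$. Given a 2-coloring of $F$, if no blue induced $K_{1,n}$ exists then each vertex of $F$ has at most $n-1$ blue neighbors inside any independent subset of its $F$-neighborhood; applying this to the independent subsets of $F$ arising from independent $F_0$-neighborhoods leads, via a pigeonhole analogous to the base case $H=K_2$ (where $F_0=K_{1,n}$, $F=K_{s,sn}$, and at least $s$ vertices on the leaf side are all-red-connected to the center side), to an induced $H$-copy in $F_0$ each of whose $K_{s,s}$-blocks in $F$ is entirely red. The union of those blocks is the desired red induced $H[s]=H'$.

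The main obstacle is the central color-analysis of the basic bound: aggregating the forced blue edges from the twin-substitution scheme at a single pivot vertex requires careful bookkeeping and possibly a well-chosen orientation (for example, ensuring no vertex of $H$ has both out-degree zero and low $H$-degree, to keep every pivot candidate of sufficient $F$-degree). If this direct analysis proves too delicate, an equivalent route is induction on $|E(H)|$: remove one edge $e$ to get a graph $H'$, invoke the inductive bound to get $F'\a(H',K_{1,n})$, and attach $n-1$ new vertices to $F'$ that play the role of $W_e$ and jointly force either a red extension to $H$ or the missing blue star.
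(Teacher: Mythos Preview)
Your construction does not work in general. Take $H=K_3$ on vertices $a,b,c$. Your graph $F$ then consists of $a,b,c$ together with independent sets $W_{ab},W_{bc},W_{ca}$ each of size $n-1$, where (for any orientation) every $W$-vertex is adjacent to exactly two of $a,b,c$ and to nothing else. Consequently every triangle of $F$ contains at least two of $a,b,c$, hence contains one of the three edges $ab,bc,ca$. Colour $ab,bc,ca$ blue and all remaining edges of $F$ red. No triangle is monochromatic red, while the blue graph has maximum degree~$2$, so there is no blue $K_{1,n}$ once $n\ge 3$. Thus $F\na(K_3,K_{1,n})$, even though $\IR(K_3,K_{1,n})=3n=|V(F)|$.

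The underlying problem is that by setting $N_F(w)=N_H(u)$ rather than making $w$ a genuine twin of $u$ in $F$, your $W$-vertices see only the original copy of $V(H)$. The family of induced $H$-copies in $F$ is then far too rigid: each copy is forced to reuse almost all of the original $V(H)$, and a handful of blue edges there can kill every copy simultaneously. The ``iterate on the remaining blue edges of $H$'' step cannot recover from this, because there is nowhere to iterate to; your acknowledged obstacle is in fact fatal for this construction, and the suggested induction on $|E(H)|$ inherits the same rigidity.

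The paper uses instead the \emph{full} blow-up $G$ of $H$, with parts $V_1,\ldots,V_k$ of sizes $1,(n-1)d_2+1,\ldots,(n-1)d_k+1$, where $d_i$ is the number of neighbours of $v_i$ among $v_1,\ldots,v_{i-1}$ in a fixed ordering. A red induced $H$ is then found greedily: having embedded $u_1,\ldots,u_{i-1}$, each of the $d_i$ relevant $u_j$'s sends at most $n-1$ blue edges into the independent set $V_i$ (else those $n$ leaves form a blue induced $K_{1,n}$), so at most $(n-1)d_i$ vertices of $V_i$ are blocked and a valid $u_i$ remains. No case analysis is needed. The bound for the $s$-blow-up $H'$ follows by multiplying every part size by $s$ and embedding the parts of $H'$ one at a time by the same greedy argument.
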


\begin{proof}
Let the vertices of $H$ be $v_1, \ldots, v_k$ such that $v_i$ has $d_i$ neighbours among $v_1, \ldots, v_{i-1}$, for $i=2, \ldots, k$.
Then in particular $d_2+ \cdots +d_k = |E(H)|$.
Consider a blow-up $G$ of $H$ with a vertex set being a union of  pairwise disjoint  sets $V_1, \ldots, V_k$ representing $v_1, \ldots, v_k$, respectively, and the edge set $E(G) = \{uv: u\in V_i, v\in V_j, v_iv_j\in E(H)\}$. We see that any induced subgraph of $G$,  formed by choosing a single vertex from each set $V_i$ is isomorphic to $H$.  Let $|V_1|=1$, $|V_i |=(n-1)d_i+1$, $i=2, \ldots,  k$.
We claim that $G\a (H, K_{1,n})$ in a stronger form, i.e., such that any red/blue edge coloring of $G$ contains either a blue star on $n$ edges or a red induced copy of $H$ with respective vertices $u_1, \ldots, u_k$ in corresponding sets $V_1, \ldots, V_k$.
Consider a red/blue coloring of $G$ with no blue star on $n$ edges. Let $u_1$ be from $V_1$. Assume we found a red induced copy of $H_{i-1}$ of $H[\{v_1, \ldots, v_{i-1}\}]$ with vertices $u_j\in V_j$, $j=1, \ldots, i-1$, corresponding to $v_j$'s. 
Consider $V_i$. There are $d_i$ edges from $v_i$ to $\{v_1, \ldots, v_{i-1}\}$ in $H$, so there are at most $(n-1)d_i$ vertices of $V_i$ that send a blue edge to some of $u_1, \ldots, u_{i-1}$. 
Since $|V_i|= (n-1)d_i+1$, there is a vertex in $V_i$ that sends only red edges to $\{u_1, \ldots, u_{i-1}\}$. Let $u_i$ be that vertex. Then $G[V(H_{i-1})\cup \{u_i\}]$ is a red induced copy of $H[\{v_1,\dots,v_i\}]$.

When $H'$ is a blow-up of $H$, we proceed similarly by taking $G'$ to be a blowup of $H$ with parts of sizes $s, s((n-1)d_2+1), \ldots, s((n-1)d_i+1), \ldots$ and embedding a part of $H'$ corresponding to the blowup of $v_i$ in $V_i$.
\end{proof}

\section{Proofs of the main results} \label{Proofs}

\subsection{Proof of Theorem \ref{Lower-bound}}
\begin{theorem} \label{LB1}
Let $H$ be a graph that is not complete bipartite. Then 
$\IR(H, K_{1,n}) \geq n + q + 1$, for $q=\sqrt{n+1/4}-1/2$. 
\end{theorem}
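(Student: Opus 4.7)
The argument proceeds by case analysis on $\chi(H)$.

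If $\chi(H) \geq 3$, the classical partition bound recalled in Section~\ref{Basic} yields $\IR(H, K_{1,n}) \geq (\chi(H)-1)n + 1 \geq 2n + 1$. Since $q = \sqrt{n+1/4} - 1/2 \leq n$, this gives $\IR(H, K_{1,n}) \geq n + q + 1$, as desired.

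If instead $H$ is bipartite but not complete bipartite, we must show that for every graph $F$ on $N \leq n + q$ vertices there is a red--blue edge coloring of $F$ avoiding both a red induced $H$ and a blue induced $K_{1,n}$. If $\alpha(F) < n$, coloring every edge blue works: $F$ has no independent set of size $n$ and hence no induced $K_{1,n}$, and since $H$ has at least one edge, there is no red induced $H$ either. Otherwise, fix a maximum independent set $I \subseteq V(F)$ with $|I| \geq n$, and set $S = V(F) \setminus I$, so $|S| \leq q$. Consider the baseline coloring that makes all edges of $F[S]$ blue and all cross-edges of $F[I,S]$ red. Blue neighborhoods then have size at most $|S| - 1 < n$, so no blue induced $K_{1,n}$ arises. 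A red induced copy $S^*$ of $H$ in this coloring must have $F[S^*] \cong H$ with every edge red, which forces $S^* \cap I$ and $S^* \cap S$ to be independent in $F$; hence $F[S^*]$ is a bipartite graph between these two parts.

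The heart of the argument is to modify the baseline coloring so that no offending $S^*$ survives. Since $H$ is not complete bipartite, whenever $F[S^*]$ is a bipartite graph isomorphic to $H$ there must be a non-edge between $S^* \cap I$ and $S^* \cap S$; by turning a carefully chosen red cross-edge in each offending $S^*$ to blue, one can break every red $H$-copy while retaining the blue-degree bound, since each $s \in S$ can afford up to $n-1$ blue cross-neighbors---a much larger budget than needed due to $|S| \leq q$. The main obstacle is choreographing these local modifications consistently across all offending $S^*$, as distinct induced $H$-copies may share vertices and edges. I expect a greedy argument, exploiting the smallness $|S| = O(\sqrt{n})$ and the restrictive structure of non-complete-bipartite bipartite graphs (which forces any induced $H$-copy to carry a missing cross-edge), to complete the proof.
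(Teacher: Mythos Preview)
Your plan leaves the decisive step unproved. The reduction to the bipartite case is fine, and in that case you correctly set up the baseline colouring (cross-edges red, $F[S]$ blue) and observe that any red induced copy of $H$ sits bipartitely across $I$ and $S$ with a missing cross-edge. But then you only say that one should ``flip a carefully chosen red cross-edge in each offending $S^*$ to blue'' and that you ``expect a greedy argument'' to make all these flips consistent. That \emph{is} the content of the theorem in the bipartite case, and it is not supplied. There can be exponentially many offending copies $S^*$; flipping one edge kills only those copies containing it; and your non-edge observation does not obviously feed into the flipping scheme at all. The assertion that the budget of $n-1$ blue cross-neighbours per $s\in S$ is ``much larger than needed'' is exactly what must be argued, and you have not argued it.

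The paper avoids this modification game entirely, and its construction is quite different from yours. Rather than a maximum independent set, it looks at the set $X$ of \emph{large} vertices of $G$ --- those that are centres of induced stars on at least $n$ edges --- and at a set $Y$ of $n$ leaves of one such star. Since $Y$ is independent and $|V(G)\setminus Y|\le q$, no vertex of $Y$ can be large, so $|X|\le q$. A short double count (each $x\in X$ has at least $n-q$ neighbours inside $Y$) then shows that all but at most $|X|q\le q^2$ vertices of $Y$ are adjacent to every vertex of $X$; since $q^2+q\le n$, one obtains $Y'\subseteq Y$ with $|Y'|\ge q$ such that $(X,Y')$ spans a \emph{complete} bipartite subgraph of $G$. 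Now colour exactly the edges between $X$ and $Y'$ red and all other edges blue. Every induced subgraph of the red graph is itself complete bipartite, so no red induced copy of $H$ can occur. And every large vertex has at least $|Y'|\ge q$ incident red edges, hence at most $(n+q-1)-q<n$ incident blue edges, so no blue induced $K_{1,n}$ occurs either. The idea you are missing is to make the red graph complete bipartite on well-chosen parts from the outset, rather than to start from an arbitrary bipartite red graph and try to repair it. (Note too that the paper's argument handles all non-complete-bipartite $H$ uniformly; your separate treatment of $\chi(H)\ge 3$ is correct but unnecessary.)
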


\begin{proof}
Consider a graph $G$ on $n+ q $ vertices. We shall show that we can color its edges such that the red graph is complete bipartite and there is no blue induced star on $n$ edges. 
We call a vertex {\it large} if  it is a center of an induced star on at least $n$ edges. We say that a star is {\it large} if it has at least $n$ edges and is induced.
Consider all maximal large stars. Let $X$ be the set of centers of these stars.

Let $Y$ be the set of leaves of a large star such that $|Y|=n$. Note first that no vertex in $Y$ is large because $Y$ is an independent set, so its members have neighbors in  $V(G)-Y$ only, a set of size at most $n+q - n  <n$. Thus $X\cap Y = \emptyset$ and thus there are at most $q$ large vertices, i.e., $|X|\leq q$.

Consider a bipartite subgraph of $G$ with parts $X$ and $Y$.  We claim that all but at most $q^2$ vertices of $Y$ send $|X|$ edges to $X$.
Let $e$ be the number of edges between $X$ and $Y$. Let $\ell$ be the number of vertices in $Y$ that send at most $|X|-1$ edges to $X$. 
Then on the one hand we have that $e\geq  |X| (n-q)$ since each large vertex has at least $n$ neighbors, at most $q$ of those outside of $Y$.
On the other hand $e\leq \ell (|X|-1) + (n- \ell) |X|$.
Setting these two inequalities together,  we have that $|X|(n-q)\leq \ell(|X|-1)+ (n-\ell)|X|$. Thus $\ell\leq |X|q\leq q^2$. 

Thus there is a set $Y'\subseteq Y$, so that $Y',X$ form a complete bipartite graph and $|Y'|\geq n-q^2 \geq q$, for $q \leq \sqrt{n+1/4}-1/2$.
Now color this complete bipartite  graph with parts $X$ and $Y'$ red, and color all remaining edges blue. Clearly there is no induced red $H$.
We see that each large vertex has at least $q$ of its incident edges colored red. Thus, there are at most $n+q -1- q < n$ vertices incident to a large vertex edges that are colored blue.
Therefore there is no blue large star. 
\end{proof}

\begin{proof}[Proof of Theorem \ref{Lower-bound}]
Let $H$ be a graph that is not a star.
If $H$ is not a complete bipartite graph, Theorem \ref{LB1} gives us a desired result.
If $H$ is a complete bipartite graph, it contains $C_4$ since $H$ is not a star.
Then $\IR(H, K_{1,n}) \geq  n + \lfloor \sqrt{n} - 6 n^{11/40}\rfloor$, as follows from a result of Burr et al. \cite{BEFRS}.
\end{proof}


\subsection{Proof of Theorem \ref{general}}

In order to prove the main result, we shall be applying multicolored version of Szemer\'edi's Regularity Lemma. The following Lemma will be used to analyse the reduced graph, 
with pink corresponding to sparse in red regular pairs and yellow corresponding to non-regular pairs.

\begin{lemma} \label{yellow-pink-white} Let $\epsilon'>0 $ be a fixed small constant, $r$ be an integer, $r\geq 2$, $C>1$,   $\epsilon' \ll C^{-3}6^{-r}$, and  $n_0$ be sufficiently large.
Let $G$ be an edge-colored complete $r$-partite graph with parts $X_1, \ldots, X_r$ each of size at least $n_0$, with edge colors yellow, pink, and white.  
Assume that the following conditions hold:\\
1. $|X_1|\geq |X_2|\geq \cdots \geq |X_r|$,\\
2. $\frac{1}{C} \leq  \frac{|X_i|}{|X_j|} \leq C$ for any $i,j \in [r]$,\\
3. for any  $i, j\in [r]$,  $1\leq j<i$, and any $v\in X_i$,  $v$ sends at most $(\frac{1}{r-1}-\frac{\epsilon'C^3}{r-1} )|X_j|$ pink edges to $X_j$, and \\
4. the total number of yellow edges is at most $\epsilon' 6^{-r} \sum_{1\leq i<j\leq r} |X_i||X_j|$.\\
Then there is a copy of $K_r$ in $G$ with all edges colored  white.
\end{lemma}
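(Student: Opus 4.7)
The plan is to prove the lemma by induction on $r$, building the white $K_r$ one vertex at a time. For the base case $r=2$, condition~3 bounds the pink density between $X_1$ and $X_2$ by $1-\epsilon' C^3$, and condition~4 bounds the yellow density by $\epsilon'/36$, so the white density is at least $\epsilon' C^3-\epsilon'/36>0$ (using $C>1$), and any white edge is the desired white $K_2$.

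For the inductive step, the key idea is to pick a vertex $v_r\in X_r$ with two useful properties, restrict each of the remaining parts $X_j$ (for $j<r$) to the white neighborhood of $v_r$, and then appeal to the inductive hypothesis on the restriction. Condition~3 already guarantees that every $v\in X_r$ has pink-degree at most $\alpha|X_j|=(1-\epsilon' C^3)|X_j|/(r-1)$ to each $X_j$ with $j<r$. We additionally want $v_r$ to have yellow-degree at most $\tau|X_j|$ to each such $X_j$, for a threshold $\tau$ on the order of $\sqrt{\epsilon'}$. Markov's inequality combined with condition~4 shows that the fraction of $v\in X_r$ that are yellow-heavy to some $X_j$ is of order $\epsilon' 6^{-r} C^2 r^2/\tau$, which for this choice of $\tau$ is much smaller than~$1$. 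Fix such a good $v_r$ and set $X_j'=N_W(v_r)\cap X_j$ for $j<r$; then $|X_j'|\geq(1-\alpha-\tau)|X_j|$, a constant fraction of~$|X_j|$.

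The main obstacle will be verifying that the restricted family $(X_1',\ldots,X_{r-1}')$ still satisfies hypotheses~1--4 with admissible new parameters $\epsilon''$ and $C''$. The size-ratio bound inflates from $C$ to at most $C'':=C/(1-\alpha-\tau)$; the effective per-vertex pink-density bound becomes $\alpha/(1-\alpha-\tau)$, which the slack $\epsilon' C^3$ in condition~3 keeps below the required $(1-\epsilon''(C'')^3)/(r-2)$; the total yellow count is unchanged while $\sum|X_i'||X_j'|$ shrinks only by a bounded factor, so the yellow ratio inflates by at most a factor of roughly $6$. The $6^{-r}$ calibration in condition~4 is chosen precisely to absorb this single factor of $6$ per induction step, preserving $\epsilon''\ll (C'')^{-3}6^{-(r-1)}$. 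Once the parameter inequalities are checked, the inductive hypothesis yields a white $K_{r-1}$ in $X_1'\cup\cdots\cup X_{r-1}'$, and together with $v_r$ this forms the desired white $K_r$.
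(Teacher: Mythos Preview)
Your overall strategy---induction on $r$, selecting a vertex $v_r\in X_r$ with low yellow degree to every other part, restricting to its white neighbourhoods, and applying the inductive hypothesis---is exactly the paper's approach. The gap is in the parameter bookkeeping when you set $X_j' = N_W(v_r)\cap X_j$ and let $C$ inflate to $C'' = C/(1-\alpha-\tau)$.

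Because the ratios $|X_j'|/|X_j|$ can vary with $j$ (from $1-\alpha-\tau$ up to $1$), your $C''$ can genuinely be as large as $C\cdot\tfrac{r-1}{r-2}$ up to lower-order terms. Now trace what this does for $r=3$. The pink check requires $\alpha/(1-\alpha-\tau)\le (1-\epsilon''(C'')^3)$, which forces $\epsilon''(C'')^3\lesssim 2\epsilon' C^3$; with $(C'')^3\approx 8C^3$ this gives $\epsilon''\lesssim \epsilon'/4$. On the other hand the yellow check, in the worst case $|X_i'|\approx \tfrac12|X_i|$, needs $\epsilon'' 6^{-2}|X_1'||X_2'|\ge \epsilon' 6^{-3}\sum_{i<j\le 3}|X_i||X_j|$, which forces $\epsilon''\gtrsim \epsilon'$. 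These two requirements are incompatible. (Equivalently: the single factor of $6$ you gain from $6^{-r}\to 6^{-(r-1)}$ cannot simultaneously pay for the yellow inflation \emph{and} the factor $(C''/C)^3\approx 8$ needed to keep $\epsilon''\ll (C'')^{-3}6^{-(r-1)}$.) The same obstruction recurs at every step with $r\ge 3$.

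The paper avoids this by not taking the full white neighbourhood. It first chooses the yellow threshold small enough (of order $\epsilon'$, not $\sqrt{\epsilon'}$) so that pink${}+{}$yellow from $u$ to each $X_j$ is at most $\tfrac{1}{r-1}|X_j|$, and then \emph{truncates} each $X_j'$ to size exactly $\tfrac{r-2}{r-1}|X_j|$. With equal shrink factors, $C$ is unchanged, condition~1 is trivially preserved, and the pink bound transforms as the identity
\[
\Bigl(\tfrac{1}{r-1}-\tfrac{\epsilon' C^3}{r-1}\Bigr)|X_j|=\Bigl(\tfrac{1}{r-2}-\tfrac{\epsilon' C^3}{r-2}\Bigr)|X_j'|,
\]
so one may keep $\epsilon''=\epsilon'$. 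The yellow inflation is then exactly $\tfrac{r(r-1)}{(r-2)^2}\le 6$, which the $6^{-r}$ calibration absorbs. The missing idea in your plan is this uniform truncation; once you add it, the rest of your outline goes through.
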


\begin{proof}
We shall proceed by induction on $r$. \\

Let $r=2$.  We first claim that there is a vertex $v$ in $X_2$ such that $v$ is incident to at most $\epsilon'6^{-2} |X_1|$ yellow edges. 
Indeed, otherwise the total number of yellow edges would be larger than assumed. So, we see that $v$ is incident to at most $\epsilon'6^{-2} |X_1|$ yellow edges and at most $(\frac{1}{1}- \epsilon'C^3)|X_1|$  pink edges. Thus there are at least $((C^3- 6^{-2})\epsilon')|X_1|>0$ white edges, and in particular there is a white $K_2$.\\

Assume that $r\geq 3$ and that the statement of the lemma holds for smaller values of $r$. 
We shall be using the fact that $C^2 |X_r|^2 \geq |X_i||X_j| \geq \frac{1}{C^2} |X_r|^2$ for any $i, j \in [r]$. 
We shall find a vertex $u$ in $X_r$  that sends a lot of white edges to each of $X_i$'s, $i\in [r-1]$. Then we shall apply induction to the subgraph spanned by the white neighborhood of $u$. \\

First, we claim that there is a vertex in $X_r$ that sends at most $\epsilon' 6^{-r} (C^3 \binom{r}{2})  |X_i|$ yellow edges to each $X_i$, $i\in[r-1]$.
Otherwise each vertex of $X_r$ sends more than $ \epsilon' 6^{-r} C^3\binom{r}{2}  |X_i|$ yellow edges to some $X_i$, thus the total number of yellow edges 
between $X_r$ and the rest of the graphs is more than

\begin{eqnarray*}
\epsilon' 6^{-r}  C^3 \binom{r}{2} |X_r| |X_{r-1}|  &\geq & \epsilon' 6^{-r}  C^3 \binom{r}{2}  \frac{1}{C} |X_r|^2 \\
 &\geq & \epsilon' 6^{-r} C^3 \binom{r}{2} \frac{1}{C}   \frac{1}{C^2} \frac{1}{\binom{r}{2}} \sum_{1\leq i < j \leq r} |X_i||X_j| \\
 &= & \epsilon'  6^{-r} \sum_{1 \leq i < j \leq r} |X_i||X_j|.
\end{eqnarray*}

This contradicts the assumption on the total number of yellow edges.\\

Let $u=u_r$ be such a vertex, i.e., a vertex from $X_r$ that  sends at most $\epsilon' 6^{-r} C^3 \binom{r}{2}  |X_i|$ yellow edges to each $X_i$, $i\in[r-1]$. So, we know that for any $i\in [r-1]$, the number of vertices of  $X_i$ joined to $u$ by a pink or a yellow edge is  at most 
$$\left(\frac{1}{r-1} -\frac{\epsilon'C^3}{r-1}  + \epsilon' 6^{-r} C^3 \binom{r}{2} \right)|X_i| \leq \frac{1}{r-1}|X_i|.$$ 
%
Thus $u$ is joined to at least $(1-\frac{1}{r-1} )|X_i|$ vertices of $X_i$ via white edges, for each $i\in [r-1]$. \\

We choose a subset  $X'_i$  of $X_i$, such that 
$|X_i'|= (1-\frac{1}{r-1} )|X_i| = \frac{r-2}{r-1}|X_i|$ and   $u$ sends only white edges to $X_i'$, for each  $i\in [r-1]$.
 Let $G'$ be a subgraph of $G$ induced by $X_1', \ldots, X_{r-1}'$ with the inherited coloring. We shall argue that we can apply induction to $G'$. 
For that we need to check  that \\
1. $|X_1'|\geq \ldots \geq |X_{r-1}'|$,\\
2. $\frac{1}{C} \leq \frac{|X_i'|}{|X_j'|} \leq C$ for any $i,j \in [r-1]$, \\
3. the total number of yellow edges in $G'$ is at most $\epsilon' 6^{-r+1} \sum_{1\leq i<j\leq r-1} |X'_i||X'_j|$, and \\
4.  for any $i, j\in [r-1]$,  $1\leq j<i$,  each vertex $v$ in a part $X'_i$ sends at most $(\frac{1}{r-2} -\frac{\epsilon'C^3}{r-2})|X'_j|$ pink edges to $X'_j$.\\

The first two statements follow trivially since $|X'_i| = \frac{r-2}{r-1} |X_i|$, $i\in [r-1]$.
Let us verify item 3.   Here, we shall be using the fact that $|X_r|\leq |X_i|$ for any $\in [r-1]$, thus $|X_r|\leq   (|X_1|+ \cdots + |X_{r-1}|)/(r-1)$. 
We know that the total number of yellow edges in $G'$ is at most the total number of yellow edges in $G$, that
is 
\begin{eqnarray*}
\epsilon' 6^{-r} \sum_{1\leq i<j\leq r} |X_i||X_j| & = & \\
\epsilon' 6^{-r}  \left(  \sum_{1\leq j \leq r-1} |X_r||X_j| + \sum_{1\leq i<j\leq r-1} |X_i||X_j| \right) & \leq & \\
\epsilon' 6^{-r} \left(  \sum_{1\leq j \leq r-1}  \frac{(|X_1|+ \cdots + |X_{r-1}|)}{r-1} |X_j| +  \sum_{1\leq i<j\leq r-1} |X_i||X_j| \right) & =&\\
\epsilon' 6^{-r}  \left(  \frac{r}{r-1}  \sum_{1\leq i<j\leq r-1} |X_i||X_j| \right) & = & \\
\epsilon' 6^{-r}  \left(  \frac{r}{r-1}  \frac{(r-1)^2}{(r-2)^2}  \sum_{1\leq i<j\leq r-1} |X'_i||X'_j| \right) & \leq & \\
\epsilon' 6^{-r+1}  \left(  \sum_{1\leq i<j\leq r-1} |X'_i||X'_j| \right). &&
\end{eqnarray*}

\noindent
To verify the last inequality observe that  $ \frac{r (r-1)}{(r-2)^2}  \leq 6$ for any $r\geq 3$.\\

 Finally, lets verify item 4.  We have that the number of pink edges from a vertex $v\in X_i'$ to $X_j'$, for $j<i$ is at most the number of pink edges from $v$ to $X_j$ in $G$, that is at most

$$
\left(\frac{1}{r-1} -\frac{\epsilon'C^3}{r-1}\right)|X_j| =
\left(\frac{1}{r-1} -\frac{\epsilon'C^3}{r-1}\right) \frac{r-1}{r-2} |X_j'| =
\left(\frac{1}{r-2} -\frac{\epsilon'C^3}{r-2}\right) |X_j'|. $$

%
%

 So, now, as items 1.-4. are verified, we have a white $K_{r-1}$ in $G'$ that, together with $u$, forms a white $K_r$ in $G$. 
\end{proof}

\begin{proof}[Proof of Theorem \ref{general}]
Consider $\gamma'$, $0<\gamma'<1$ and let $H$ be a  graph of chromatic number $r$. Let $n_0$ be sufficiently large.
We need to show that  $\IR(H, K_{1,n})\leq (r-1)^2 n + \gamma' n$.\\

Let $\gamma= \frac{\gamma'}{2(r-1)^2+1} $ and constants $\epsilon, \sigma, \eta$ be chosen such that $0< \epsilon \ll \eta \ll\sigma \ll \gamma$.
Note that $\gamma<1/2$ for any $r\geq 2$.
Let $Y_1, \ldots, Y_{r}$ be pairwise disjoint vertex sets, such that 
$|Y_r| = \gamma n$ and $|Y_i| = (1+ 2 \gamma)(r-1) n$, $i\in [r-1]$. Then $\sum_{1\leq i \leq r} |Y_i| = (r-1)^2 n + (2(r-1)^2+1)\gamma n =(r-1)^2 n + \gamma' n.$ \\

Let $G'$ be a random  $r$-partite graph with parts $Y_1, \ldots, Y_{r}$ with a probability of a given edge between different parts not being selected is $\sigma$. Then with a positive probability each vertex not in a part $Y_i$ sends between $0.9 \sigma |Y_i|$ and $1.1 \sigma |Y_i|$ non-edges to $Y_i$,  $i\in [r]$. In addition, with high probability there is at least a $\sigma/2$-proportion of non-edges between  any  
$\tilde{Y_i} \subseteq Y_i$ and $\tilde{Y_j} \subseteq Y_j$ with $|\tilde{Y_i}|\geq |Y_i|/M$ and $|\tilde{Y_j}|\geq |Y_j|/M$, for any  distinct $i,j\in [r]$ and any constant $M$. Therefore with a positive probability there is a graph satisfying these properties. We call such a graph $G$. \\

Formally, let $G$ be an $r$-partite graph with parts  $Y_1, \ldots, Y_{r}$ such that:\\
 {\bf 1. } each vertex not in a part $Y_i$ sends between $0.9 \sigma |Y_i|$ and $1.1 \sigma |Y_i|$ non-edges to $Y_i$, for any  $i\in [r] $, \\
 {\bf 2. }  there is at least a $\sigma/2$-proportion of non-edges between  any  $\tilde{Y_i} \subseteq Y_i$ and $\tilde{Y_j} \subseteq Y_j$ with $|\tilde{Y_i}|\geq |Y_i|/M$ and $|\tilde{Y_j}|\geq |Y_j|/M$, for any  distinct $i,j\in [r]$ and any constant $M$.\\

Consider an edge-coloring $c$ of $G$ in red and blue and treat the non-edges between parts as a third color, "non-edge". We assume that there is no blue induced star on $n$ edges and will prove that there is an induced red copy of $H$.\\

We apply Szemer\'edi's Regularity Lemma to $G$ with $\epsilon$ and  find a partition of $V(G)$ into an exceptional part $V_0$  of size at most $\epsilon|V(G)|$ and parts of equal sizes  contained in respective $Y_i$'s, i.e., parts   $Y_i^1, \ldots, Y_i^{k_i}$,  with a total number of parts $k\leq M$ such that  all but $\epsilon \binom{k}{2} $ pairs $Y_i^\ell, Y_j^m$, $i,j\in[r], \ell \in[k_i], m\in [k_j]$,   are $\epsilon$-regular in each of the three colors -  blue edges of $G$, red edges of $G$, and "non-edges" of $G$. Note that since the parts are of equal sizes, we have that $k_1= \ldots = k_{r-1}=k'$ and $k_r = \gamma/((r-1)(1+ 2\gamma))k' +x k'$, with a small term $|x|=O(\epsilon)$.\\


By the embedding lemma, see for example Axenovich and Martin \cite{AM},  if there are parts $Y_i^{f(i)}$, $i\in [r]$, $f(i)\in [k_i]$, such that all pairs of these parts are $\epsilon$-regular with density at least $\eta$ in both red and in "non-edges", then there is a red induced copy of $H$. Next we shall argue that we can find such a set of $r$ parts. We say it is  a {\it good} set of $r$ parts.\\

Consider an auxiliary graph $F$ that is $r$-partite with parts $X_i$, $i\in [r]$ corresponding to $Y_i$'s, such that $|X_i| = k_i$ for each $i\in [r]$ and vertices of $X_i$ $\{x_i^1, \ldots, x_i^{k_i}\}$ correspond to parts $Y_i^1, \ldots, Y_i^{k_i}$.
We say that an edge $e=x_i^\ell x_j^m $ is {\it associated} with the pair $Y_i^\ell, Y_j^m$.  
Note that $1/C\leq |X_i|/|X_j| \leq C$, for $C= (r-1)(1+2\gamma)/\gamma $.
We shall color the edges of $F$ in three colors - yellow, pink, and white as follows. An edge is yellow if the respective pair in $G$ is not $\epsilon$-regular in some of the colors red, blue, or "non-edges".   An edge is pink if  the respective pair is $\epsilon$-regular, but has density less than $\eta$ in red. All other edges of $F$ are white. Note that a good set of $r$ parts in $G$ correspond to a white $K_r$ in $F$.
Thus, it is sufficient for us to verify that $F$ satisfies the conditions of Lemma \ref{yellow-pink-white}.   Let $\epsilon'= \frac{(1+3\gamma)^2}{\gamma (1+2\gamma)}\epsilon 6^r$.\\

Note that since $\sum_{1\leq i\leq r} |X_i|= k$,   $\sum_{1\leq i<j\leq r} |X_i||X_j|\geq  \frac{\gamma (1+2\gamma)}{(1+3\gamma)^2} \binom{k}{2}$. By the Regularity Lemma, the total number of non-$\epsilon$-regular pairs is at most $\epsilon \binom{k}{2}$, that is at most 
$\epsilon  \frac {(1+3\gamma)^2}{\gamma (1+2\gamma)} \sum_{1\leq i<j\leq r} |X_i||X_j| = \epsilon' 6^{-r}  \sum_{1\leq i<j\leq r} |X_i||X_j|$ .  Thus, the condition of Lemma \ref{yellow-pink-white}  on yellow edges is satisfied.\\~\\

Next we show that the number of pink edges of any vertex of $F$ from part $X_i$ to part $X_j$, $j<i$ is at most $(\frac{1}{r-1} - \frac{\epsilon'C^3}{r-1}) k_j$.
If this fails for a vertex $x_i^\ell$, then we see that in $G$, for any $j$, $1\leq j<i$,  the number of red edges between $Y_i^\ell$ and $Y_j$ is at most 
$$|Y_i^\ell| \cdot \left(  \left(\frac{1}{r-1} - \frac{\epsilon'C^3}{r-1}\right) \eta |Y_j| + \left(1 - \frac{1}{r-1} + \frac{\epsilon'C^3}{r-1}\right)|Y_j|\right).$$ 

Thus there is a vertex  in $Y_i^\ell$ that sends at most 
$(1 - (1-\eta)(\frac{1}{r-1}-\frac{\epsilon'C^3}{r-1}))|Y_j|$ red edges to $Y_j$ and thus sends at least  $b$ blue edges to $Y_j$, where 

\begin{eqnarray*}
b & \geq & \\
((1- \eta)\left(\frac{1}{r-1}-\frac{\epsilon'C^3}{r-1}\right)  - 1.1\sigma )|Y_j| &\geq &\\
 ((1- \eta)\left(\frac{1}{r-1}-\frac{\epsilon'C^3}{r-1}\right)  - 1.1\sigma ) (1+2\gamma) (r-1) n & = & \\
  \left[(1- \eta)( 1- \epsilon' C^3)) - 1.1 (r-1) \sigma  \right](1+2\gamma) n  & \geq &\\
 \left[1 - \eta - \eta \epsilon' C^3  - \epsilon'C^3- 1.1(r-1) \sigma  \right](1+ 2\gamma)n & \geq & \\
 \left[ 1- \gamma/2\right](1+ 2\gamma) n & >& n.
 \end{eqnarray*}

This is a contradiction since we assumed that there is no blue induced star on $n$ edges.
The penultimate inequality holds since   $1/2> \gamma \gg \eta$, $\gamma \gg (r-1)\sigma$, and $\gamma \gg \epsilon'$.\\

We now see that the conditions of the Lemma \ref{yellow-pink-white} are satisfied, so there is a white $K_r$ in $F$. It corresponds to $r$ parts $Y_1^{i_1}, \ldots, Y_r^{i_r}$ in $G$ so that each of the $\binom{r}{2}$ pairs formed by these parts is $\epsilon$-regular in all three colors and with red density at least $\eta$.  Using the property 2. of $G$, we see that the non-edges also have positive density at least $\sigma/2>\eta$. Thus, by the embedding lemma applied to these $r$ parts, we have an induced red copy of $H$ in $G$.
\end{proof}


\subsection{Proof of Theorem  \ref{Upper-bound}}

\begin{lemma} \label{Ktt}  Let $t$ be an integer, $t\geq 2$ and let $H=K_{t,t}$. Then, there is a positive constant $c$ and integer $n_0$ such that for any $n>n_0$,  
$\IR(H, K_{1,n})\leq n+ cn^{1-\frac{1}{2t}}$. 
\end{lemma}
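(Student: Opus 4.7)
My plan is to construct $F$ explicitly as a single complete bipartite graph and reduce the induced Ramsey statement to a $K_{t,t}$-forcing question on its red subgraph, to be settled by the K\H{o}v\'ari-S\'os-Tur\'an (KST) theorem. Fix a constant $c'=c'(t)>0$ and set $b=c'n^{1-1/(2t)}$. Let $F=K_{n+b,\,b}$ with parts $A$ of size $n+b$ and $B$ of size $b$, so that $|V(F)|=n+2b\le n+cn^{1-1/(2t)}$ with $c=2c'$. The goal is to verify $F\a(K_{t,t},K_{1,n})$.

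The construction is tailored so that both $A$ and $B$ are independent sets in $F$: consequently, any choice of $t$ vertices from $A$ together with $t$ vertices from $B$ spans an \emph{induced} copy of $K_{t,t}$ in $F$. So to find a red induced $K_{t,t}$ in $F$ it suffices to exhibit a $K_{t,t}$ (as a subgraph) in the red bipartite graph between $A$ and $B$.

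I would now assume a 2-coloring of $E(F)$ with no blue induced $K_{1,n}$ and count red edges. A vertex in $A$ has $F$-degree $b<n$ (for $n$ large enough given $c'$), and hence cannot be the center of any induced $K_{1,n}$; so any blue induced $K_{1,n}$ must be centered at some $v\in B$. For such a $v$, all $n+b$ of its neighbors lie in the independent set $A$, so $v$ having at least $n$ blue edges to $A$ would immediately produce a blue induced $K_{1,n}$. Thus every $v\in B$ must send at least $(n+b)-(n-1)=b+1$ red edges to $A$, giving at least $b(b+1)$ red edges in the bipartite graph between $A$ and $B$.

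It remains to apply KST: a $K_{t,t}$-free bipartite graph with parts of sizes $n+b$ and $b$ has at most $C_t(n+b)b^{1-1/t}+O(b)$ edges. Comparing with the lower bound $b(b+1)$ and dividing through, the required inequality reduces, modulo lower-order terms, to $b^{1+1/t}\gtrsim n$. The choice $b=c'n^{1-1/(2t)}$ yields $b^{1+1/t}=\Theta(n^{1+(t-1)/(2t^2)})$, whose exponent strictly exceeds $1$ for every $t\ge 2$, so for any fixed $c'>0$ and all sufficiently large $n$ the inequality is strict. The main point needing care is absorbing the $O(b)$ slack of the KST bound and the ``$+1$'' terms into this polynomial gap; since $b=o(b^{1+1/t})$, this is routine. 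Once KST delivers a $K_{t,t}$ in the red bipartite graph with $t$ vertices in each of $A,B$, the opening structural remark promotes it to a red induced $K_{t,t}$ in $F$, completing the proof.
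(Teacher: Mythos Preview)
Your proof is correct and follows essentially the same approach as the paper: take a complete bipartite graph with a small part of order $n^{1-1/(2t)}$ and a large part of order $n$, observe that the absence of a blue induced $K_{1,n}$ forces each high-degree vertex to have $\Omega(n^{1-1/(2t)})$ red edges, and then apply K\H{o}v\'ari--S\'os--Tur\'an to find a red $K_{t,t}$, which is automatically induced. The only cosmetic differences are that the paper swaps the names of the two parts and applies the non-bipartite form of KST on all $(1+\epsilon)n$ vertices rather than the bipartite form you used; both lead to the same exponent comparison.
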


\begin{proof}
Let $\epsilon =10 n^{-1/2t}$. Consider a complete bipartite graph with parts $A$ and $B$ of sizes $\frac{\epsilon}{2} n$ and $(1+\frac{1}{2}\epsilon)n$, respectively. The total number of vertices in the graph is 
$n+ \epsilon n$.
Assume that there is a red/blue edge-coloring of this graph with no blue induced $K_{1,n}$ and no red $K_{t,t}$.  Since there are no blue stars  of size $n$ centered at $A$,  each vertex in $A$ is incident to at least $\frac{\epsilon}{2} n$ red edges. 
Thus the total number of red edges is at least $\frac{\epsilon^2}{4} n^2 $.   We also have that the total number of red edges is at most  
$${\rm ex}( (1+ \epsilon)n, K_{t,t}) \leq  \frac{1}{2} (t-1)^{1/t}  \left((1+\epsilon)n\right)^{2-1/t} + \frac{t-1}{2} n,$$
see  \cite{KST}.
Thus $$\frac{\epsilon^2}{4} n^2 \leq  \frac{1}{2} (t-1)^{1/t}  \left( (1+\epsilon) n \right)^{2-1/t} + \frac{t-1}{2} n.$$
In particular this implies, for sufficiently large $n$, that  $\epsilon< c n^{-1/2t}$, for a positive constant that could be taken $8$.
A contradiction. Thus $\IR(H, K_{1,n})\leq n+ 10n^{1-\frac{1}{2t}}$. 
\end{proof}

\begin{lemma}\label{2K2-P4} Let $n$ be sufficiently large. If $H\in \{P_4, 2K_2\}$, then 
$\IR(H, K_{1,n}) \leq n + 2n \log^{-1/4} n $.
\end{lemma}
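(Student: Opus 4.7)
The plan is to construct, for each $H \in \{P_4, 2K_2\}$, a graph $F$ on $N = n + 2n\log^{-1/4}n$ vertices witnessing $F \a (H, K_{1,n})$, in analogy with the proof of Lemma \ref{Ktt} but replacing the Kővári--Sós--Turán input with an argument tailored to these two small bipartite graphs. Set $\epsilon = \log^{-1/4}n$. I would take $F$ to be built from a complete bipartite graph $K_{A,B}$ with $|A| = s$ of order $n\epsilon$ and $|B| = N - s \approx n(1+\epsilon)$, with a carefully chosen set $H'$ of non-edges between $A$ and $B$ (e.g.\ a near-matching or a small pseudorandom bipartite pattern) so that: (a) every $a \in A$ still has $|B|-o(n)$ neighbours in $B$, which gives induced stars $K_{1,n}$ at the $A$-vertices, and (b) $F$ contains many induced copies of $H$ via the 2+2 configurations $\{a_i, a_j, b_k, b_l\}$ whose non-edge pattern in $H'$ is $2K_2$ (for the $2K_2$ case) or a single edge (for the $P_4$ case).

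The analysis proceeds in two steps. First, fix a red/blue colouring with no blue induced $K_{1,n}$. Since $B$ is independent in $F$, any $n$ blue $B$-neighbours of an $A$-vertex form a blue induced $K_{1,n}$, so each $a \in A$ has blue $B$-degree at most $n-1$, and hence red $B$-degree at least $\deg_F(a) - (n-1) \geq |B| - n = \Theta(n\epsilon)$. Summing over $A$ yields at least $\Omega(s \cdot n\epsilon)$ red edges, giving a red bipartite graph of controlled density in $F$. Second, for $H = 2K_2$, the ``no red induced $2K_2$'' condition translates into a local antisymmetry constraint on the red bipartite adjacency matrix restricted to the non-edge structure of $H'$: for every induced $2K_2$ configuration $\{a_i,a_j,b_i,b_j\}$, at most one of the two crossing edges $a_ib_j, a_jb_i$ is red. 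A careful double-counting combining this constraint with the red-edge lower bound produces the contradiction, once $s$, $|H'|$, and $\epsilon$ are balanced. The $P_4$ case is handled analogously, with the ``exactly one non-edge'' 4-vertex patterns playing the role of the $2K_2$ configurations and the constraint becoming a disjointness condition on certain red neighbourhoods.

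The scaling $\epsilon = \log^{-1/4}n$ should emerge from optimising the interplay between three quantities: the lower bound $s\cdot n\epsilon$ on red edges, the number of induced $H$-configurations produced by $H'$, and the extremal bound on how many red edges a bipartite graph can host while avoiding a red induced $H$ in $F$. Roughly, one balances $s \cdot n\epsilon$ against a Kővári--Sós--Turán-type threshold for this setup, which has the shape $|A|^{\alpha}|B|^{\beta}$ with logarithmic corrections arising from a Ramsey-type estimate on induced $H$-free bipartite graphs.

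The main obstacle will be step two: forcing the induced red $H$ inside $F$ from density alone, rather than merely finding a red $H$ as a (possibly non-induced) subgraph. The construction of $H'$ must be delicate enough that an arbitrary dense red subgraph is compelled to cover both edges of some induced $2K_2$ (or all three edges of some induced $P_4$) in $F$; this is where the precise rate $\log^{-1/4}n$ will come from, and this quantitative extremal/counting step is what distinguishes the argument from the regularity-based proof of Theorem \ref{general}.
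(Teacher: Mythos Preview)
Your outline has the right high-level shape (bipartite host, force many red edges at the small side, then an extremal argument), but the crucial step~--- the construction of the host and the actual extremal/counting argument forcing a red induced $H$ --- is missing, and your suggested candidates for the non-edge set $H'$ do not work. If $H'$ is a near-matching, then every induced $2K_2$ in $F$ uses two vertices of $A$ together with their \emph{matched} partners in $B$; but nothing prevents the red neighbourhoods of the $A$-vertices from lying entirely in $B\setminus V(H')$, so the lower bound on red edges gives no information about the edges $a_ib_j$ that matter. A generic ``pseudorandom'' $H'$ runs into the same localisation problem unless its parameters are chosen with care you have not specified. In short, you have not exhibited a host graph, and you acknowledge yourself that step two is the obstacle; as written this is a plan, not a proof, and the plan does not point to the construction that makes the exponent $-1/4$ appear.

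The paper's construction is quite different and worth knowing. Set $a=\sqrt{\log n}$, $b=a^2=\log n$, and let $I=I(X,b-a)$ be the element/subset incidence bipartite graph on $X=[b]$ and $Y=\binom{X}{b-a}$. The host $G$ is obtained by blowing up each $x\in X$ into a blob of size $c\approx n/\log n$ so that $|V(G)|=(1+2\epsilon)n$ with $\epsilon=\log^{-1/4}n$; note $|Y|=\binom{b}{a}=o(\epsilon n)$. Each $y\in Y$ is adjacent to $b-a$ blobs, hence has degree $\ge(1+\epsilon)n$, so under a colouring with no blue $K_{1,n}$ every $y$ has $\ge\epsilon n$ red edges, hitting at least $\frac{\epsilon}{1+2\epsilon}|X|$ distinct blobs. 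Averaging gives a blob-vertex $x$ with a red $Y$-neighbourhood $Y_x$ of size $\ge\frac{\epsilon}{1+2\epsilon}|Y|$. The key structural fact is purely combinatorial: in $I$, between any $X''\subseteq X$ of size $\frac{\epsilon}{1+2\epsilon}b$ and any $Y''\subseteq Y$ of size $\frac{\epsilon}{1+2\epsilon}|Y|$ there must be a non-edge, because the number of $(b-a)$-sets containing all of $X''$ is $\binom{b-|X''|}{a}$, and the inequality $\binom{b-|X''|}{a}<\frac{\epsilon}{1+2\epsilon}\binom{b}{a}$ holds precisely when $\epsilon\approx a^{-1/2}=\log^{-1/4}n$. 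Applying this to $Y_x$ versus the set $X_1$ (resp.\ $X_2$) of blobs receiving a red edge from a suitable $y_1\notin N(x)$ (resp.\ $y_2\in Y_x$) produces the missing non-adjacency needed to complete a red induced $2K_2$ (resp.\ $P_4$). The exponent $-1/4$ is thus not a Kov\'ari--S\'os--Tur\'an balance but a binomial-coefficient calculation tied to the choice $a=\sqrt{b}$ in the incidence graph.
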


\begin{proof} 
Let  $\epsilon = \log ^{-1/4} n$.  Let $a=\sqrt{\log n}$, $b=a^2 = \log n$ and for a set $X$ on $b$ vertices, let $I= I(X, b-a)$ be an incidence graph with parts $X$ and $Y = \binom{X}{b-a}$.
Let $G$ be obtained from $I$ by blowing each vertex  of $X$  by $c$ vertices such that the total number of vertices in a resulting graph is $(1+2\epsilon) n$.

 Formally, 
the vertex set of $G$ is $Y\cup \bigcup_{x\in X} B_x$, where the sets $B_x$, $x\in X$ and $Y$  are pairwise disjoint,  $|B_x|=c$,  $x\in X$, and the edge set of $G$ is $\{x'y: x'\in B_x ,  y\in Y, x\in y\}$. We refer to the sets $B_x$ as blobs. Let $X' =  \bigcup_{x\in X} B_x$.  We shall argue that $G\a (2K_2, K_{1,n})$ and $G\a (P_4, K_{1,n})$.\\

 First we make a few observations about the structure of our graph. We see that 
$$|Y| = \binom{b}{b-a}  \leq  a^{2a}   \leq  2^ { 2 \sqrt{ \log n}  \log( \sqrt{ \log n})}  \leq  2^{ \sqrt{\log n} \log \log n}=  o(n \epsilon).$$

Thus the total number of vertices in a blown-up part, i.e., in all blobs $B_x$, $x\in X$ is at least $n+ 2\epsilon n  - o(n \epsilon )$. 
We see that $c$, the size of each blob is $(n +2 \epsilon n  - o( n\epsilon )) / \log n$. 
Each vertex of $Y$ is  not adjacent to $\sqrt{\log n}$ blobs,  and adjacent to all vertices of all other blobs. Thus the degree of a vertex from $Y$ in $G$ is at least 
$(\log n  -  \sqrt{\log n})  c \geq (\log n  -  \sqrt{\log n})  (n +2 \epsilon n  - o( n\epsilon )) / \log n\geq (1+ \epsilon) n$.

Consider a  red/blue coloring of edges of $G$.  Assume there is no blue induced star on $n$ edges.
Thus each vertex from $Y$ sends at least $\epsilon n$ red edges to $X'$. Since each blob has size $c \leq  n ( 1+ 2\epsilon) / |X|  $, we have that each vertex of $Y$ sends a red edge to at least $n_X= \frac{\epsilon  }{1+ 2\epsilon} |X|$ blobs. The total number of red edges is at least $\epsilon n |Y|$, thus there is a vertex, say  $x$,  in $X'$ that is adjacent to a set $Y_x$ of at least  $ \epsilon n |Y|/|X'| \geq \frac{\epsilon n |Y| }{(1+  2\epsilon)n} =  \frac{\epsilon}{1+2\epsilon}  |Y|$ vertices via red edges. Let $n_Y= \frac{\epsilon}{1+2\epsilon} |Y|$.\\

Consider $x$,  $Y_x$, a set of neighbours of $x$ adjacent to it via red edges, $y_1\in Y$, a vertex of $Y$ non-adjacent to $x$, $y_2\in Y_x$, and
sets $X_1$ and $X_2$, $X_1, X_2\subseteq X$, such that 
for any $v\in X_1$, $y_1$ sends a red edge to $B_v$ and for any $v\in X_2$, $y_2$ sends a red edge to $B_v$.
If there is a non-edge  $y' u$ between $Y_x$ and $X_1$ in $I$, we have a red induced $2K_2$  induced by $x, y', y_1, x'$, where $x'$ is a red neighbour of $y_1$ in $B_u$.
If there is a  non-edge  $y'' u$ between $Y_x$ and $X_2$ in $I$, we have a red induced $P_4$  induced by $ y'', x, y_2, x''$, where $x''$ is a red neighbour of $y_2$ in $B_u$.\\

We have that $|X_i| \geq n_X= \frac{\epsilon}{1+2\epsilon} |X|$, $i=1,2$, and $|Y_x|\geq  n_Y= \frac{\epsilon}{1+2\epsilon} |Y|$.
Thus, it is sufficient for us to check that in $I$ there is a non-edge between any subset of of $X$ of size $n_X$ and any subset of $Y$ of size $n_Y$.
Assume not, i.e., there is a subset $X''$ of $X$ of size $n_X$ and $Y''$ of $Y$ of size $n_Y$ so that $X''\cup Y''$ induces a complete bipartite graph in $I$.\\

We shall count the number $t$ of subsets of size $b-a$ containing $X''$, recalling that $|X|=b$ and $|X''| = \frac{\epsilon}{1+2\epsilon}b$:
$$t = \binom {b- |X''|}{b-a-|X''|} = \binom{b(1+  \epsilon)/(1+2\epsilon)}{a}.$$
Since any set in $Y''$ is adjacent to all of $X''$, we must have that $t\geq |Y''|$. 
We have that $$|Y''| \geq  \frac{\epsilon}{1+2\epsilon}|Y| = \frac{\epsilon}{1+2\epsilon}\binom{b}{b-a}.$$

Thus

\begin{eqnarray*}
t &\geq& |Y'| \Longrightarrow\\
\binom{b(1+  \epsilon)/(1+2\epsilon)}{ a}&\geq &\frac{\epsilon}{1+2\epsilon}\binom{b}{a} \Longrightarrow\\
\frac{b(1+  \epsilon)}{1+2\epsilon}\left(\frac{b(1+  \epsilon)}{1+2\epsilon} -1\right)\cdots \left(\frac{b(1+ \epsilon)}{1+2\epsilon} -a+1\right)&\geq& \frac{\epsilon}{1+2\epsilon} b \cdots (b-a+1)\Longrightarrow\\
\frac{b(1+ \epsilon)}{b}  \frac{b(1+ \epsilon) - 1(1+2\epsilon)}{b-1} \cdots \frac{b(1+  \epsilon) - (a-1)(1+2\epsilon)}{b- (a-1)}& \geq& (1+2\epsilon)^{a-1}\epsilon \Longrightarrow\\
\frac{b}{b}  \frac{b - 1(1+  \epsilon)^{-1}(1+2\epsilon)}{b-1} \cdots \frac{b - (a-1)(1+ \epsilon)^{-1}(1+2\epsilon)}{b- (a-1)} &\geq &
  \left(\frac{1+2\epsilon}{1+ \epsilon}\right)^{a-1}\epsilon.\\
\end{eqnarray*}

Note that the left hand side of the last inequality is less than $1$. On the other hand, since $\epsilon = 1/\sqrt{a}$, the right hand side is 
$ \left(\frac{1+2\epsilon}{1+ \epsilon}\right)^{a-1}\epsilon \geq    (1+ \frac{\epsilon}{1+\epsilon} (a-1)) \epsilon \geq  1$. 
This contradiction concludes the proof.
\end{proof}

\begin{proof} [Proof of Theorem \ref{Upper-bound}]
The theorem follows from Lemmas \ref{2K2-P4}  and \ref{Ktt}. 
\end{proof}

\vskip 1cm

\noindent
\section{Acknowledgements}  The authors thank Casey Tompkins for interesting discussions on $\IR(2K_2, K_{1,n})$ and  Christian Ortlieb for a nice observation on the lower bound in Lemma \ref{stars}. The first author thanks Ryan Martin for conversations about embeddings in multicolor multipartite graphs,  Jacob Fox and Stanford University for hospitality.


\begin{thebibliography}{99}


\bibitem{AM} M. Axenovich and  R. Martin,  A version of Szemer\'edi's regularity lemma for multicolored graphs and directed graphs that is suitable for induced graphs, arXiv:1106.2871



\bibitem{BEFRS}
S. A. Burr, P. Erd\H{o}s, R. J. Faudree, C. C. Rousseau, R. H. Schelp,
Some complete bipartite graph-tree Ramsey numbers, Ann. Discrete Math. 41
(1989), 79--89.

\bibitem{CFS} D. Conlon, J.  Fox, B. Sudakov,  Short proofs of some extremal results II. J. Combin. Theory Ser. B 121 (2016), 173--196.



\bibitem{deuber}
W. Deuber, {"A generalization of {R}amsey's theorem"}, Infinite and finite
  sets (R.~Rado A.~Hajnal and V.~S\'os, eds.), vol.~10, North-Holland, 1975,
  pp.~323--332.




\bibitem{DFR} A. Dudek,  P. Frankl,  V. R\"{o}dl, 
Some recent results on Ramsey-type numbers. 
Discrete Appl. Math. 161 (2013), no. 9, 1197--1202.




\bibitem{F} J. Fox, A new proof of the graph removal lemma, Ann. of Math. 174 (2011), 561--579.

\bibitem{FS} J. Fox and B. Sudakov, Induced Ramsey-type theorems, Adv. Math. 219 (2008), 1771--1800.

\bibitem{FHS} J. Fox,  H. Huang,  B. Sudakov, 
On graphs decomposable into induced matchings of linear sizes. 
Bull. Lond. Math. Soc. 49 (2017), no. 1, 45--57.


\bibitem{G} I. Gorgol, A note on lower bounds for induced Ramsey numbers,  Discussiones Mathematicae
Graph Theory 39 (2019) 647--654.

\bibitem{G1} I. Gorgol,  A note on a triangle-free--complete graph induced Ramsey number, Discrete Math.  {235}, 1--3 (2001), 159–--163.

\bibitem{G-thesis} I. Gorgol, Planar and Induced Ramsey Numbers (in Polish), PhD thesis, Adam
Mickiewicz University, Poznan, 2000.


\bibitem{H}
F. Harary, J. Ne\v{s}šet\v{r}il, V. R\"odl, {Generalized Ramsey theory for graphs.
XIV. Induced Ramsey numbers}, Graphs and other combinatorial topics (Prague, 1982) 59 (1983), 90–--100.


\bibitem{KPR}
Y.~Kohayakawa, H. J. Pr\"omel, V.~R\"odl, {Induced {R}amsey numbers},
  Combinatorica {18} (1998), 373--404.

\bibitem{KS}
A. Kostochka and N. Sheikh, {On the induced Ramsey number $\IR(P_3;H)$},
Topics in discrete mathematics 26 (2006), 155–--167.


\bibitem{KST} T. K\H{o}v\'ari, V.T. S\'os, P. Tur\'an, On a problem of K. Zarankiewicz, Colloq. Math. 3 (1954), 50--57.




\bibitem{P} 
T. D. Parsons, Ramsey graphs and block designs, Trans. Am. Math. Soc. 209 (1975), 33--44.

\bibitem{R} F. Ramsey,  {On a problem in formal logic}, Proceedings of London Mathematical Society, (1927), 30: 264--286. 



\bibitem{SS} M. Schaefer and P. Shah, {Induced graph Ramsey theory}, Ars Combin. {66} (2003), 3---21.

\bibitem{W} D. West, {\it Introduction to graph theory}, Prentice Hall, Inc., Upper Saddle River, NJ, 1996. xvi+512 pp. 


\end{thebibliography}
\end{document}